\newtheorem{definition}{Definition}[section]
\newtheorem{conjecture}{Conjecture}  
\newtheorem{theorem}{Theorem}[section]
\newtheorem{proposition}{Proposition}[section]
\newtheorem{corollary}{Corollary}[section]
\def\tsc#1{\csdef{#1}{\textsc{\lowercase{#1}}\xspace}}
\begin{document}
\let\WriteBookmarks\relax
\def\floatpagepagefraction{1}
\def\textpagefraction{.001}

\shorttitle{}    

\shortauthors{Xu Wang and Jiayi Zhu}  

\title [mode = title]{On Base , Normal and Near-normal Sequences}  



%

\author[]{Xu Wang}

\cormark[1]


\ead{003796@nuist.edu.cn}



\affiliation[a]{organization={School of Electronics and Information Engineering, Nanjing University of Information Science and Technology},
            addressline={219 Ningliu Road}, 
            city={Nanjing},
            postcode={210044}, 
            state={Jiangsu Province},
            country={P.R.C.}}

\author[]{Jiayi Zhu}

\cortext[1]{Corresponding author}



\begin{abstract}
The base sequences BS(n+1,n) are four sequences of $\pm1$ and lengths n+1,n+1,n,n with zero auto correlation. The base sequence conjecture states that BS(n+1,n) exists for all positive integers and has been verified for $n\le40$. We present our algorithm and give construction of BS(n+1,n) for $n=41,42,43$.\\
The Normal sequences NS (n) and the Near-normal sequences NNS (n) are subclasses of BS(n+1,n). Yang conjecture asserts that there is a NNS(n) for each even integer n and has been verified for $n\le40$. We found that there is no NNS(n) for n=42 and 44 by exhaustive search, which gives the first counter case of Yang conjecture. We also show that there is no NS(n) for n=41,42,43,44,45 by exhaustive search and proves that no NS(n) exist for $n=8k-2,k \in Z_+$.
\end{abstract}


\begin{highlights}
\item A new algorithm to search base sequence is proposed , example of base sequence of lengths 41,42,43 is given for the first time. 
\item Near-normal sequences is conjectured to exist for all even integers(Yang conjecture), we verified that 42 and 44 are the smallest 2 counterexamples of Yang conjecture. 
\item We prove that normal sequence do not exist for $n=8k-2,k \in Z_+$. We also verified that normal sequence do not exist for n=41-45 by computer search. 
\end{highlights}

\begin{keywords}
 Base Sequence\sep Normal Sequence\sep Near-normal Sequence\sep Yang Conjecture
\end{keywords}

\maketitle 

\section{Introduction}
\label{Section:intro}
The base sequences are four sequences of $\pm1$ and lengths m,m,n,n with zero auto correlation, which was first introduced by Turyn\cite{turyn1974} for the construction of Hadamard matrix. The base sequence conjecture(BSC)\cite{colbourn2007}(P319) asserts that BS(n+1,n) exists for all $n \in Z_+$. As pointed out by Ðoković\cite{dokovic1998}, the BSC is a stronger conjecture than the famous Hadamard conjecture. Currently BSC is verified for all $n \le 40$ and all Golay numbers ($n = 2^a10^b26^c$ with $a,b,c \ge 0$)\cite{dokovic2010small}.

For a given sequence A=($a_{1}$,$a_{2}$,...,$a_{n}$),the non-periodic autocorrelation function $N_{A}$ is defined as
\begin{equation}
N_{A} (i)=\sum_{j\in Z} a_{j}a_{i+j} , \enspace where \enspace a_{k}=0,if \enspace k<1\enspace or\enspace k>n
\label{Equation:11}
\end{equation}
\begin{definition}
The base sequences,denoted BS(m,n), are four sequences A,B,C,D of $\pm1$ and lengths m,m,n,n with zero auto-correlation, which can be expressed as
\label{Definition:11}
\begin{equation}
N_{A}(i)+N_{B}(i)+N_{C}(i)+N_{D}(i)=\left\{
\begin{array}{ccl}
2m+2n &,  & {i=0}\\
0     &,  & {i=1,...,n-1}
\end{array}\right.
\label{Equation:12}
\end{equation}
\end{definition}
\begin{definition}
(A,B,C,D) $\in$ BS(n+1,n) are normal sequences,denoted NS(n), if
\begin{equation}
\left\{
\begin{array}{ccl}
b_i=a_i    &,  & 1 \le i \le n\\
a_{n+1}=1  &,  & b_{n+1}=-1
\end{array}\right.
\label{Equation:13}
\end{equation}
\label{Definition:12}
\end{definition}

\begin{definition}
(A,B,C,D) $\in$ BS(n+1,n) is a near-normal sequence,denoted NNS(n), if n is even,and
\begin{equation}
\left\{
\begin{array}{ccl}
b_i=(-1)^{i-1}a_i   &,  & 1 \le i \le n\\
a_{n+1}=1           &,  & b_{n+1}=-1
\end{array}\right.
\label{Equation:14}
\end{equation}
\label{Definition:13}
\end{definition}
Normal Sequences and Near-normal Sequences can be viewed as sub classes of BS(n+1,n), they are introduced by Yang for the construction of T-sequences \cite{yang1989}. If there exist Normal Sequences and Near-normal Sequences of length t and Base Sequences BS(m,n), then a T-sequences of length $(m+n)(2t+1)$ can be constructed, and we call $y=2t+1$ the Yang number.

\begin{theorem} \cite{cohen1988,dokovic2010hadamard}
If Base sequences BS(m,n),Yang number y,Williamson-type matrices WT(w) and Baumert-Hall-Welch arrays BHW (4h) exist, then Hadamard matrix HM (4n) exists for $n=yh(m+n)w$.
\label{Theorem:14}
\end{theorem}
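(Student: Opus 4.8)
The plan is to prove this as a multiplication theorem, i.e.\ by an explicit chain of combinatorial substitutions in which the claimed Hadamard matrix is assembled block by block out of the four hypothesized ingredients. The arithmetic identity $4\cdot yh(m+n)w$ will then merely record how the orders of the pieces multiply, and the real content is checking that each substitution preserves the relevant orthogonality relations.

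First I would pass from the base sequences to $T$-matrices. Given $(A,B,C,D)\in \mathrm{BS}(m,n)$, the four $(0,\pm1)$-sequences obtained from $\tfrac12(A+B)$, $\tfrac12(A-B)$ (padded with zeros to length $m+n$) together with $\tfrac12(C+D)$, $\tfrac12(C-D)$ (shifted to occupy the remaining positions) are mutually disjoint, sum to a $\pm1$ sequence, and have nonperiodic autocorrelations summing to $0$ in every nonzero shift; circulating them yields $T$-matrices $T_1,\dots,T_4$ of order $m+n$ with $\sum_i T_iT_i^{\top}=(m+n)I$. This is the step that converts the zero-autocorrelation hypothesis into the matrix identities needed downstream. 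I would then invoke the Yang number: the defining feature of a Yang number $y$ is that it supplies the auxiliary normal/near-normal-type sequences that let one multiply the length of a $T$-sequence by $y$, so applying it produces $T$-matrices of order $t:=y(m+n)$ with the same orthogonality and disjointness properties.

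Next I would feed these order-$t$ $T$-matrices into the array $\mathrm{BHW}(4h)$. Since a Baumert--Hall--Welch array of order $4h$ is an orthogonal design of type $(h,h,h,h)$ in four commuting variables, substituting the $T_i$ for those variables and expanding gives a Baumert--Hall array of order $4ht$, that is, an $\mathrm{OD}(4ht;\,ht,ht,ht,ht)$; here the disjointness of the $T_i$ is exactly what makes the unwanted cross terms vanish. Finally I would substitute the Williamson-type matrices $\mathrm{WT}(w)$ --- four symmetric, pairwise commuting $\pm1$ matrices of order $w$ with $\sum X^{2}=4wI$ --- for the four variables of this Baumert--Hall array. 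The commutativity together with the squared-sum identity forces every off-diagonal block of the resulting $4htw\times 4htw$ matrix to cancel while the diagonal contributes $4htw\cdot I$, so the product is a Hadamard matrix of order $4htw=4yh(m+n)w$, as required.

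I expect the main obstacle to be the orthogonality bookkeeping across the two substitution steps rather than any single hard estimate. One must verify that plugging the disjoint $T$-matrices into the $\mathrm{BHW}$ variables genuinely preserves the orthogonal-design relations, and that the commutativity and symmetry of the Williamson matrices are precisely what is needed for the final block cancellation. The delicate part is tracking which matrices are required to commute and which need only the additive/disjointness structure; once each substitution is shown to be legitimate, the order arithmetic follows automatically.
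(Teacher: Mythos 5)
The paper does not prove this statement at all: it is quoted verbatim from the cited references \cite{5,8} as a known composition theorem, so there is no internal proof to compare against. Your outline is the standard argument from that literature and is essentially correct: base sequences give four disjoint $(0,\pm1)$ $T$-sequences of length $m+n$ via $\tfrac12(A\pm B)$ and $\tfrac12(C\pm D)$, the Yang number multiplies the $T$-sequence length by $y$, the resulting $T$-matrices are composed with the $\mathrm{BHW}(4h)$ array, and Williamson-type matrices are substituted last, giving order $4\cdot h\cdot y(m+n)\cdot w$.

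One point in your sketch is glossed over and is worth flagging: you describe $\mathrm{BHW}(4h)$ as merely an $\mathrm{OD}(4h;h,h,h,h)$ and then ``substitute the $T_i$ for the variables.'' Naive substitution of matrices into an arbitrary orthogonal design requires the substituted matrices to be pairwise amicable ($T_iT_j^{\top}=T_jT_i^{\top}$), which circulant $T$-matrices do \emph{not} automatically satisfy. This is precisely why the hypothesis is a Baumert--Hall--Welch array rather than a generic $\mathrm{OD}(4h;h,h,h,h)$: the Welch block structure (circulant and back-circulant blocks arranged so that transposes pair up correctly) is what makes the cross terms cancel. Your own caveat about ``orthogonality bookkeeping'' points at the right place, but a complete proof must invoke that extra structure explicitly rather than the bare orthogonal-design property.
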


Base, Normal and Near-normal Sequences are important for the construction of the Hadamard matrix, the method is described in \Cref{Theorem:14}. For example, BS(71,36) is used for the construction of Hadamard matrix of order 428 \cite{kharaghani2005}, it is also believed that this may be the most promising approach to construct H(668), the smallest unknown case for Hadamard matrix. Therefore, the study of Base, Normal and Near-normal Sequences is of interest and has been studied by many previous works.\Cref{Proposition:15} lists the state of the art for Base, Normal and Near-normal Sequences.

\begin{proposition}
\leavevmode
\begin{enumerate}
  \item $\mathrm{BS}(n+1,n)$ exists for all $n \le 40$ and for all Golay numbers ($n = 2^a10^b26^c$ with $a,b,c \ge 0$). The first unknown case is $\mathrm{BS}(42,41)$~ \cite{dokovic2010small}.
  \item $\mathrm{NNS}(n)$ exists for all even integers $n \le 40$. The first unknown case is $\mathrm{NNS}(42)$~ \cite{dokovic2010hadamard}.
  \item $\mathrm{NS}(n)$ exist for all the Golay numbers and $n=3,5,7,9,11,12,13,25,29$, $\mathrm{NS}(n)$ do not exist for all the other $n\le40$ The first unknown case is $\mathrm{NS}(41)$. \cite{dokovic2011}.  
  \item All odd numbers $n \le 81$ are the Yang numbers except $35,43,47,55,63,67,71,75,79$.
\end{enumerate}
\label{Proposition:15}
\end{proposition}

The paper is organized as follows: In \Cref{Section:facts} we recall properties of BS(n+1,n) that we use for our search. In \Cref{Section:search} we describe the outline of our search algorithm for BS(n+1,n) and give the first construction of BS(n+1,n) for $n=41,42,43$. In \Cref{Section:near} we discuss the exhaustive search method for Near-normal sequences based on the algorithm in \Cref{Section:search}, our result shows that NNS(n) do not exist for $n=42,44$ , this gives the first counter case of the Yang conjecture. In \Cref{Section:normal} we discuss the exhaustive searching method for Normal sequences. We prove that NS(n) do not exist for $n=8k-2,k \in Z_+$. We also perform exhaustive searching for NS(41)-NS(45) and it turns out that none of these sequences exists, therefore 83,85,87,89,91 are not Yang numbers. 

The base sequence conjecture holds for $n \le 43$ and remains open, the Yang conjecture is not true for n=42 and 44, and the existence of BS(n+1,n) for $n>43$, NNS(n) for even integer $n>44$ and NS(n) for $n>46$ is still open.

\section{Facts on Base Sequences(n+1,n)}
\label{Section:facts}
In this section, we mention all the properties we use in our search algorithm about the Base Sequences(n+1,n).

Firstly we introduce the isomorphic transformations of BS(n+1,n). 
For a given sequence A=($a_{1}$,$a_{2}$,...,$a_{n}$),the negated sequence $-$A, reversed sequence A$'$ and alternated sequence $A^*$ are defined by
\begin{equation}
\begin{array}{rcl}
-A&=&-a_1,-a_2,...,-a_n\\
A'&=&a_n,a_{n-1},...,a_1\\
A^*&=&a_1,-a_2,a_3,-a_4,...,(-1)^{n-1}a_n
\end{array}
\label{Equation:21}
\end{equation}

There are five isomorphic transformations of (A;B;C;D)$\in$ BS(n+1,n)\cite{dokovic2010classification}:

\begin{enumerate}[label=(\roman*), leftmargin=*, itemsep=0.5ex]
  \item Negate one or more sequences of $A, B, C, D$.
  \item Reverse one or more sequences of $A, B, C, D$.
  \item Interchange the sequences $A, B$ or $C, D$.
  \item Alternate all four sequences of $A, B, C, D$.
  \item Replace 
    $\begin{bmatrix} c_i & c_{n+1-i} \\ d_i & d_{n+1-i} \end{bmatrix}$ 
    that are 
    $\begin{bmatrix} 1 & -1 \\ -1 & 1 \end{bmatrix}$ 
    and 
    $\begin{bmatrix} -1 & 1 \\ 1 & -1 \end{bmatrix}$ 
    with 
    $\begin{bmatrix} -1 & 1 \\ 1 & -1 \end{bmatrix}$ 
    and 
    $\begin{bmatrix} 1 & -1 \\ -1 & 1 \end{bmatrix}$, respectively.
\end{enumerate}

Two base sequences are equivalent if one can be transformed to the other by applying one or more transformations listed above.

Then we introduce the properties related with the sum of each sequence.

Let $A(z)=a_1+a_2z+...+a_nz^{n-1}$ , we refer to the Laurent polynomial $N(A)=A(z)A(z^{-1})$ as the norm of A,then
\begin{equation}
N(A)=A(z)A(z^{-1})=\sum_{i\in Z}N_{A}(i)(z^{i}+z^{-i}) , z\ne0
\label{Equation:22}
\end{equation}

With \Cref{Definition:11}, we have for BS(m,n)
\begin{equation}
N(A)+N(B)+N(C)+N(D)=2(m+n)
\label{Equation:23}
\end{equation}

\begin{theorem}
\label{Theorem:21}
Let A,B,C,D be BS(n+1,n),and denote the sum of sequences A;B;C;D and $A^*;B^*;C^*;D^*$ as a,b,c,d and $a^*,b^*,c^*,d^*$.
\begin{equation}
\label{Equation:24}
\begin{aligned}
&a^2+b^2+c^2+d^2=4n+2\\
&a^{*2}+b^{*2}+c^{*2}+d^{*2}=4n+2 \\
&a\equiv b\equiv a^*\equiv b^*\equiv n+1\enspace(mod\enspace 2)\\
&c\equiv d\equiv c^*\equiv d^*\equiv n\enspace(mod\enspace 2)
\end{aligned}
\end{equation}
and the modular 4 properties: 
\begin{equation}
\label{Equation:25}
\begin{aligned}
&c\equiv d,c^*\equiv d^*&(mod\enspace4)&\enspace when \enspace n\enspace even\\
&a\equiv b+2,a^*\equiv b^*+2&(mod\enspace4)&\enspace when\enspace n\enspace odd\\
\end{aligned}
\end{equation}

\begin{equation}
\label{Equation:26}
\begin{aligned}
&a\equiv a^*,b\equiv b^*,c\equiv c^*,d\equiv d^*,&when\enspace  n\equiv 0 (mod\enspace4)\\
&a\equiv a^*+2,b\equiv b^*+2,c\equiv c^*,d\equiv d^*,&when\enspace  n\equiv 1 (mod\enspace4)\\
&a\equiv a^*+2,b\equiv b^*+2,c\equiv c^*+2,d\equiv d^*+2,&when\enspace  n\equiv 2 (mod\enspace4)\\
&a\equiv a^*,b\equiv b^*,c\equiv c^*+2,d\equiv d^*+2,&when\enspace  n\equiv 3 (mod\enspace4)
\end{aligned}
\end{equation}

\end{theorem}

\begin{proof}
Let z=1 and -1 in \Cref{Equation:23}, we will have \Cref{Equation:24}. And all the elements in sequences are $\pm1$, therefore we have the modular 2 equation.\\
The proof for \Cref{Equation:25} use the fact that if an integer a is odd, then 
$a^2\equiv1 \enspace(mod\enspace8)$\\
When n is even, a,b are odd and c,d are even.\\
With $a^2+b^2+c^2+d^2=4n+2\equiv2 \enspace(mod\enspace8)$\\
We have $c^2+d^2\equiv0 \enspace(mod\enspace8)$, thus $c\equiv d\enspace (mod\enspace4)$\\
Similarly, when n is odd, a,b are even and c,d are odd.\\
With $a^2+b^2+c^2+d^2=4n+2\equiv6 \enspace(mod\enspace8)$\\
We have $a^2+b^2\equiv4 \enspace(mod\enspace8)$, thus $a\equiv b+2\enspace (mod\enspace4)$\\
In the proof for \Cref{Equation:26}, we denote $a_o$ and $a_e$ for the sum of odd and even terms in sequence A. $a=a_o+a_e,a^*=a_o-a_e$, when $a_e$ is even, $a\equiv a^*$, and when $a_e$ is odd, $a\equiv a^*+2(mod\enspace4)$
For each case in \Cref{Equation:26}, we can know whether $a_e,b_e,c_e,d_e$ is even or odd, and then we prove \Cref{Equation:26}.
\end{proof}
\begin{corollary}
\label{Corollary:21}
For NNS(n), in addition to \Cref{Theorem:21}, we have:
\begin{equation}
\label{Equation:41}
a=b^*+2,b=a^*-2
\end{equation}
For NS(n), in addition to \Cref{Theorem:21}, we have:
\begin{equation}
\label{Equation:51}
\left\{
\begin{array}{lll}
a=b+2,&a^*=b^*-2,&n\enspace odd\\
a=b+2,&a^*=b^*+2,&n\enspace even
\end{array}\right.
\end{equation}
\end{corollary}
\begin{proof}
For NNS(n), n must be even. With \Cref{Definition:13}, we have $a_o=b_o+2,a_e=-b_e$, then\\
$a=a_o+a_e=b_o+2-b_e+2=b^*+2$, 
$b=b_o+b_e=a_o-2-a_e+2=a^*-2$\\
For NS(n), With \Cref{Definition:12}, when n is odd, we have $a_o=b_o,a_e=b_e+2$, then\\
$a=a_o+a_e=b_o+b_e+2=b+2,  a^*=a_o-a_e=b_o-b_e-2=b^*-2$\\
when n is even, we have $a_o=b_o+2,a_e=b_e$, then\\
$a=a_o+a_e=b_o+b_e+2=b+2,  a^*=a_o-a_e=b_o+2-b_e=b^*+2$\\
\end{proof}
\Cref{Theorem:21} comes from \cite{dokovic1998} and is useful to narrow down the search space. Our search for Base(n+1,n) always starts from one of the $a,b,c,d,a^*,b^*,c^*,d^*$ sets obtained from \Cref{Theorem:21}. Our search for NS(n) and NNS(n) starts by finding all possible $a,b,c,d,a^*,b^*,c^*,d^*$ sets with \Cref{Theorem:21} and \Cref{Corollary:21}. We also prove that we cannot find such sets for NS(n) when $n=8k-2,k \in Z_+ $ in \Cref{Section:normal}.

\begin{theorem}\cite{koukouvinos1990,turyn1974}
\label{Theorem:22}
Let A,B,C,D be BS(n+1,n),then
\begin{equation}
\label{Equation:27}
\begin{aligned}
&a_{i}+b_{i}+a_{n+2-i}+b_{n+2-i}=\left\{
\begin{array}{rcl}
2\enspace mod\enspace 4, &  & {i=1}\\
0\enspace mod\enspace 4, &  & {i=2,...,[(n+1)/2]}
\end{array}\right.
\\
&c_{i}+d_{i}+c_{n+1-i}+d_{n+1-i}=0\enspace mod\enspace4,  {i=2,...,[n/2]}
\end{aligned}
\end{equation}
\end{theorem}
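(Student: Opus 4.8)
The plan is to push everything to residues modulo $4$ via the elementary identity that, for $x,y\in\{\pm1\}$, one has $(x-1)(y-1)\equiv 0\pmod 4$, hence $xy\equiv x+y-1\pmod 4$. Applying this termwise to $N_A(i)=\sum_{j=1}^{n+1-i}a_ja_{j+i}$ and collapsing the two resulting linear sums into partial sums of $A$, I would write $N_A(i)\equiv 2a-L_i^A-R_i^A-(n+1-i)\pmod 4$, where $a=\sum_j a_j$, $L_i^A$ is the sum of the first $i$ entries and $R_i^A$ the sum of the last $i$ entries of $A$; the same holds for $B$, and the analogue with $n$ in place of $n+1$ for $C,D$. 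Summing the four congruences and invoking $N_A(i)+N_B(i)+N_C(i)+N_D(i)=0$ for $1\le i\le n-1$ gives $\mathcal L_i+\mathcal R_i\equiv 2(a+b+c+d)-2\pmod 4$, a quantity independent of $i$, where $\mathcal L_i,\mathcal R_i$ denote the four-sequence totals of the left/right partial sums.

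Next I would difference this in $i$. Writing $P_i:=a_i+b_i+a_{n+2-i}+b_{n+2-i}$ and $Q_i:=c_i+d_i+c_{n+1-i}+d_{n+1-i}$, the increment of $\mathcal L_i$ is $a_i+b_i+c_i+d_i$ and that of $\mathcal R_i$ is $a_{n+2-i}+b_{n+2-i}+c_{n+1-i}+d_{n+1-i}$, so subtracting the relation at $i-1$ from that at $i$ kills the constant and yields, for $2\le i\le n-1$,
\[ P_i+Q_i\equiv 0 \pmod 4. \]
This is exactly the sum of the two quantities in the theorem, so the whole difficulty is to decouple $P_i$ from $Q_i$: the base‑sequence axioms only ever constrain the combined autocorrelation, and no purely local manipulation separates the $A,B$ contribution from the $C,D$ one. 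This is the main obstacle.

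To get around it I would exploit the palindromic symmetries $P_i=P_{n+2-i}$ and $Q_i=Q_{n+1-i}$, which are immediate from the definitions. Substituting $i\mapsto n+1-i$ into $P_i\equiv -Q_i$ and using these symmetries gives $P_{i+1}\equiv P_i\pmod4$ for $2\le i\le n-1$; hence all of $P_2,\dots,P_n$ are congruent modulo $4$ to a single constant $\gamma\in\{0,2\}$, and $Q_i\equiv-\gamma$ on its range. It then remains to force $\gamma\equiv 0$. For this I would use the two global sums $\sum_{i=1}^{n+1}P_i=2(a+b)$ and $\sum_{i=1}^{n}Q_i=2(c+d)$, each entry of $A,B$ (resp.\ $C,D$) occurring exactly twice. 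By Theorem 2.1 the parities $a\equiv b\equiv n+1$ and $c\equiv d\equiv n\pmod2$ make both right‑hand sides $\equiv0\pmod4$. Evaluating the left‑hand sides through the chain, using $P_{n+1}=P_1$, $Q_n=Q_1$ and the fact that $P_1,Q_1$ are automatically even, gives $(n-1)\gamma\equiv0$ from the $P$‑sum and $(n-2)\gamma\equiv0$ from the $Q$‑sum; since one of $n-1,n-2$ is odd, hence invertible modulo $4$, we conclude $\gamma\equiv0\pmod4$, which delivers $P_i\equiv Q_i\equiv0\pmod4$ on the claimed ranges.

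Finally, the exceptional value $i=1$ for the $A,B$ relation is handled separately. Expanding $a^2=\sum_j a_j^2+2\sum_{k\ge1}N_A(k)$ for each sequence, summing, and comparing with the identity $a^2+b^2+c^2+d^2=4n+2$ of Theorem 2.1 forces the only surviving off‑diagonal term $N_A(n)+N_B(n)=a_1a_{n+1}+b_1b_{n+1}$ to vanish; thus $a_1a_{n+1}b_1b_{n+1}=-1$, i.e.\ an odd number of $a_1,b_1,a_{n+1},b_{n+1}$ equal $-1$, which is precisely $P_1\equiv2\pmod4$. I expect the bookkeeping of index ranges—the asymmetry between the length‑$(n+1)$ and length‑$n$ sequences, which is exactly what makes $P_1\equiv2$ while $Q_1\equiv0$—to be the only delicate point beyond the decoupling step above.
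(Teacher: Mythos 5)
The paper does not prove this statement at all --- Theorem 2.2 is quoted from \cite{2,9} without proof --- so there is no internal argument to compare against; your proposal has to stand on its own, and it does. I checked each step: the reduction $xy\equiv x+y-1\pmod 4$ for $x,y\in\{\pm1\}$ and the resulting congruence $N_A(i)\equiv 2a-L_i^A-R_i^A-(n+1-i)\pmod 4$ are correct; summing over the four sequences makes the constant term $-2(n+1-i)-2(n-i)\equiv-2\pmod4$ independent of $i$, so differencing legitimately yields $P_i+Q_i\equiv0\pmod4$ for $2\le i\le n-1$; the decoupling via the palindromic identities $P_i=P_{n+2-i}$, $Q_i=Q_{n+1-i}$ correctly gives $P_{i+1}\equiv P_i$ on that range, hence a single constant $\gamma\in\{0,2\}$; and the two global sums $\sum_{i=1}^{n+1}P_i=2(a+b)\equiv0$ and $\sum_{i=1}^{n}Q_i=2(c+d)\equiv0\pmod4$ (the parities $a\equiv b$, $c\equiv d$ are immediate since each entry is $\pm1$) give $(n-1)\gamma\equiv(n-2)\gamma\equiv0\pmod4$, whose difference forces $\gamma\equiv0$. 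The $i=1$ case is also handled correctly. The only point worth flagging is that your final step needs $N_A(n)+N_B(n)=a_1a_{n+1}+b_1b_{n+1}=0$, which is \emph{not} literally contained in Definition 1.1 as printed (the paper's range stops at $i=n-1$); you correctly recover it from the identity $a^2+b^2+c^2+d^2=4n+2$ of Theorem 2.1, but note that Theorem 2.1 itself is only valid under the standard convention that the autocorrelations sum to zero for all $i\ge1$, so you are implicitly using the standard definition rather than the paper's (presumably typo-afflicted) one. With that caveat made explicit, the argument is complete, and it is more elementary and self-contained than the route through generating functions and quad decompositions used in the cited sources.
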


\Cref{Theorem:22} shows that for each $\begin{bmatrix} a_i & a_{n+2-i}  \\ b_i & b_{n+2-i}  \end{bmatrix}$ and $\begin{bmatrix} c_i & c_{n+1-i}  \\ d_i & d_{n+1-i}  \end{bmatrix}$, there is only 8 possible case. This property is widely used in the previous works.

\begin{theorem}\cite{koukouvinos1990}
\label{Theorem:23}
Let A,B,C,D be BS(n+1,n) and given m $\in$ {2,3,...,n+1}, denote\\
\begin{equation}
\label{Equation:28}
\begin{aligned}
&k_{im} &= \sum_{j\equiv i \pmod{m}} a_j,&
&r_{im} &= \sum_{j\equiv i \pmod{m}} b_j,\\
&p_{im} &= \sum_{j\equiv i \pmod{m}} c_j, &
&q_{im} &= \sum_{j\equiv i \pmod{m}} d_j.
\end{aligned}
\end{equation}

\begin{equation}
\label{Equation:29}
\begin{aligned}
&N_{K}(s) &= \sum_{i=1}^{m-s} k_{im}k_{i+s,m}, &
&N_{R}(s) &= \sum_{i=1}^{m-s} r_{im}r_{i+s,m}, \\
&N_{P}(s) &= \sum_{i=1}^{m-s} p_{im}p_{i+s,m}, &
&N_{Q}(s) &= \sum_{i=1}^{m-s} q_{im}q_{i+s,m}.
\end{aligned}
\end{equation}

then for the given m $\in$ {2,3,...,[(n+1)/2]},we have
\begin{equation}
\label{Equation:210}
\begin{aligned}
&N_{K}(0)+N_{R}(0)+N_{P}(0)+N_{Q}(0)=k_{1m}^2+\cdots + k_{mm}^2 + r_{1m}^2+\cdots + r_{mm}^2\\
&+p_{1m}^2+\cdots + p_{mm}^2 + q_{1m}^2+\cdots + q_{mm}^2=4n + 2\\
&N_{K}(s)+N_{R}(s)+N_{P}(s)+N_{Q}(s)+N_{K}(m - s)+N_{R}(m - s)\\
&+N_{P}(m - s)+N_{Q}(m - s)=0, \quad s = 1, \ldots, [m/2].
\end{aligned}
\end{equation}
and for the $k_{im},r_{im},p_{im},q_{im}$, we can also get the following range and module 4 property: 
\begin{equation}
\label{Equation:211}
\begin{aligned}
&\left | k_{im} \right | \le \left [ \frac{n+1-i}{m}  \right ] +1,\left | r_{im} \right | \le \left [ \frac{n+1-i}{m}  \right ] +1,i=1,...,m\\
&\left | p_{im} \right | \le \left [ \frac{n-i}{m}  \right ] +1,\left | q_{im} \right | \le \left [ \frac{n-i}{m}  \right ] +1,i=1,...,m\\
&k_{im}\equiv r_{im}\equiv\left [ \frac{n+1-i}{m}  \right ] +1,p_{im}\equiv q_{im}\equiv\left [ \frac{n-i}{m}  \right ] +1,\pmod{2}
\end{aligned}
\end{equation}

\begin{equation}
\label{Equation:212}
\begin{aligned}
&k_{1m}+r_{1m}+k_{n + 1,m}+r_{n + 1,m} \equiv 
\begin{cases}
2\pmod{4} & \text{if } n\not\equiv0\pmod{m}, \\
0\pmod{4} & \text{if } n\equiv0\pmod{m},
\end{cases}\\
&k_{jm}+r_{jm}+k_{n + 2 - j,m}+r_{n + 2 - j,m} \equiv 0\pmod{4}, \quad j = 2, \ldots, m, \\
&p_{jm}+q_{jm}+p_{n + 1 - j,m}+q_{n + 1 - j,m} \equiv 0\pmod{4}, \quad j = 1, \ldots, m.
\end{aligned}
\end{equation}
\end{theorem}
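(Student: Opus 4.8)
The plan is to study each sequence through its reduction modulo $z^{m}-1$, that is, to fold it into residue classes mod $m$. Writing $A(z)=\sum_{j=1}^{n+1}a_{j}z^{j-1}$, the polynomial $K(z):=A(z)\bmod(z^{m}-1)=\sum_{i=1}^{m}k_{im}z^{i-1}$ has exactly the residue sums $k_{im}$ of (2.8) as its coefficients, and similarly for $R,P,Q$ obtained from $B,C,D$. The one identity that drives the whole proof is the aliasing relation: the periodic autocorrelation of a folded sequence is the sum of the original non-periodic autocorrelations over all shifts congruent mod $m$. Concretely,
\begin{equation}
N_{K}(s)+N_{K}(m-s)=\sum_{t\in Z}N_{A}(s+tm),\qquad 1\le s\le m-1,
\end{equation}
because $\sum_{i}k_{im}k_{i+s,m}$ read cyclically collects precisely the products $a_{j}a_{j'}$ with $j'-j\equiv s\pmod m$; for $s=0$ the cyclic autocorrelation is $N_{K}(0)=\sum_{t}N_{A}(tm)$. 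I would prove this identity first, together with its analogues for $R,P,Q$.

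With the aliasing identity in hand, (2.11) is immediate. Adding the four identities and invoking the base-sequence property $N_{A}(j)+N_{B}(j)+N_{C}(j)+N_{D}(j)=0$ at every nonzero shift $j$ (with value $4n+2$ at $j=0$), all terms having $s+tm\ne0$ cancel. For $1\le s\le[m/2]$ no summand reaches shift $0$, so the left-hand side of the second relation of (2.11) vanishes; for $s=0$ only the $t=0$ diagonal survives, giving $\sum_{i}k_{im}^{2}+\cdots=\sum_{X}N_{X}(0)=4n+2$. The hypothesis $m\le[(n+1)/2]$ is precisely what keeps every aliased shift within the range in which the base-sequence autocorrelations are controlled.

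The bounds and parities of (2.10) are elementary counting: $k_{im}$ is a sum of $\pm1$'s over the positions $j\in\{1,\dots,n+1\}$ with $j\equiv i\pmod m$, of which there are exactly $[(n+1-i)/m]+1$, and this count bounds $|k_{im}|$ and fixes its parity; the length-$n$ sequences $C,D$ give the $[(n-i)/m]+1$ count for $p_{im},q_{im}$. For (2.13) one simply partitions each class mod $m$ into the two classes mod $2m$ it contains, yielding $k_{im}=k_{i,2m}+k_{i+m,2m}$ (the left-hand index in the displayed form is a misprint). Finally, I would derive (2.12) by summing Theorem 2.2 over residue classes: the pairing $i\leftrightarrow n+2-i$ carries class $i\bmod m$ to class $n+2-i\bmod m$, so adding the congruences $a_{i}+b_{i}+a_{n+2-i}+b_{n+2-i}\equiv0\pmod4$ (and $\equiv2$ for $i=1$) over the first-half indices $i\equiv1\pmod m$ reassembles $k_{1m}+r_{1m}+k_{n+1,m}+r_{n+1,m}$, and likewise for the other residues and for $C,D$.

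The main obstacle is this bookkeeping in (2.12), and in particular the dichotomy at the leading index. Classes $1$ and $n+1$ mod $m$ coincide exactly when $m\mid n$: if $n\not\equiv0\pmod m$ the two classes are distinct and precisely one summed congruence is the exceptional $i=1$ case, forcing the value $2\pmod4$; if $n\equiv0\pmod m$ the expression collapses to $2(k_{1m}+r_{1m})$, which is $\equiv0\pmod4$ since the parity statement of (2.10) makes $k_{1m}+r_{1m}$ even. Controlling the self-paired middle term (when $n+2-i=i$), the exact summation ranges, and this coincidence-of-classes case split is where the care lies; everything else reduces to routine counting and the single aliasing identity above.
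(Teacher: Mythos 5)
The paper does not prove this theorem; it is quoted verbatim from reference [2] (Koukouvinos, Kounias and Sotirakoglou), so there is no internal proof to compare against. Your argument is essentially the standard one from that source and is correct: folding $A(z)$ modulo $z^m-1$, using the aliasing identity $N_K(s)+N_K(m-s)=\sum_{t}N_A(s+tm)$ together with $\sum_X N_X(j)=0$ for $j\neq 0$ gives (2.11); the bounds and parities in (2.10) are the class counts $[(n+1-i)/m]+1$ and $[(n-i)/m]+1$; (2.13) is the refinement of classes mod $m$ into classes mod $2m$ (and you are right that the displayed left-hand side is a misprint for $k_{im}$); and (2.12) follows by summing the quad congruences of Theorem 2.2 over residue classes, with the $m\mid n$ dichotomy handled exactly as you describe via the evenness of $k_{1m}+r_{1m}$. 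Two small points to tighten: the hypothesis $m\le[(n+1)/2]$ is not what makes the aliasing argument work (the four autocorrelations sum to zero at every nonzero shift, so (2.11) holds for any $m$); and for the third line of (2.12) at residues containing $i=1$ you need the $i=1$ instance of the $C,D$ quad congruence, which the paper's Theorem 2.2 omits from its stated range $i=2,\dots,[n/2]$ but which does hold for base sequences (it is part of Turyn's original statement), while the self-paired middle terms are harmless since $2(a_i+b_i)\equiv 0\pmod 4$ automatically.
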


For the proof of \Cref{Theorem:23} ,please refer to \cite{koukouvinos1990}, where this property is used to search the Base sequence, the main idea is to find values of $k_{1m},...,k_{mm},r_{1m},...,r_{mm},p_{1m},...,p_{mm},q_{1m},...,q_{mm}$ defined in \Cref{Equation:28} and satisfies \Cref{Equation:210,Equation:211,Equation:212}, then continue to find values of $k_{1,2m},...,k_{2m,2m},r_{1,2m},...,r_{2m,2m},p_{1,2m},...,p_{2m,2m},q_{1,2m},...,q_{2m,2m}$, and repeat this process until $m\ge n+1$, then check if the sequence obtained is Base sequence. One problem of this method is with the increase of the order of Base sequence and the m chosen, the number of combinations of $k_{1m},...,k_{mm},r_{1m},...,r_{mm},p_{1m},...,p_{mm},q_{1m},...,q_{mm}$ will increase exponentially and become impossible to process. In our algorithm we only find values of $k_{1m},...,k_{mm},r_{1m},...,r_{mm}$ or $p_{1m},...,p_{mm},q_{1m},...,q_{mm}$ for a small number m. The benefit is that the overall combinations is relatively small, and we can use the result to to find the candidate A,B or C,D sequences for BS(n+1,n) first. We can further filter the sequences obtained by the following property.

For a given sequence A, the associate Hall polynomial $h_{A}$ if defined by $h_{A}(t)=\sum_{i=0}^{n-1} a_{i} t_{i} $

Let the real function $f_{A}$ be defined as $f_{A}=\left | h_{A} (e^{i\theta} ) \right | $, then we have 
\begin{equation}
\label{Equation:215}
f_{A}(\theta)=N_{A}(0)+2\sum_{i=1}^{n-1} N_{A}(j)cosj\theta
\end{equation}

\begin{theorem}
\label{Theorem:24}
Let A,B,C,D be BS(n+1,n) and $f_{A}$ be defined as \Cref{Equation:215},then
\begin{equation}
\label{Equation:216}
(f_{A}+f_{B}+f_{C}+f_{D})(\theta)=4n+2,\enspace for\enspace all\enspace \theta
\end{equation}
\end{theorem}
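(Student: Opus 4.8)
The plan is to recognize each $f_X(\theta)$ as the value of the norm Laurent polynomial $N(X)$ on the unit circle, then to superpose the four functions, reducing the claim to the constant-norm identity $N(A)+N(B)+N(C)+N(D)=2(m+n)$ already recorded for $BS(m,n)$.

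First I would fix $\theta$ and set $z=e^{\mathrm{i}\theta}$. Since the coefficients $a_j$ are real we have $A(z^{-1})=A(e^{-\mathrm{i}\theta})=\overline{A(e^{\mathrm{i}\theta})}$, so evaluating the norm in (2.2) gives $N(A)\big|_{z=e^{\mathrm{i}\theta}}=A(e^{\mathrm{i}\theta})\overline{A(e^{\mathrm{i}\theta})}=\bigl|h_A(e^{\mathrm{i}\theta})\bigr|^2=f_A(\theta)$; here I use that $z^k+z^{-k}=2\cos k\theta$ on the unit circle, which matches the cosine expansion defining $f_A$. The identical identification applies to $B$, $C$, $D$, so by linearity $f_A(\theta)+f_B(\theta)+f_C(\theta)+f_D(\theta)=\bigl(N(A)+N(B)+N(C)+N(D)\bigr)\big|_{z=e^{\mathrm{i}\theta}}$.

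Next I would invoke the superposition identity $N(A)+N(B)+N(C)+N(D)=2(m+n)$ valid for $BS(m,n)$; with $m=n+1$ its right-hand side is the constant $2(2n+1)=4n+2$, independent of $z$. Evaluating at $z=e^{\mathrm{i}\theta}$ then yields $f_A(\theta)+f_B(\theta)+f_C(\theta)+f_D(\theta)=4n+2$ for every $\theta$, which is the assertion.

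The only point needing care is the bookkeeping of lag ranges: the autocorrelations of $A,B$ run out to lag $n$ whereas those of $C,D$ stop at lag $n-1$, so packaging everything into a single cosine sum would require the lag-$n$ contribution $N_A(n)+N_B(n)$ to vanish. Working through the full norm identity sidesteps this, since that identity asserts that the entire Laurent polynomial $N(A)+N(B)+N(C)+N(D)$ collapses to a constant, i.e.\ every coefficient of $z^i$ with $i\neq0$ cancels, the $i=n$ term included. Thus the main, and essentially only, obstacle is conceptual: correctly passing between the three equivalent encodings of the autocorrelation data --- the lag sums $N_X(i)$, the norm polynomial $N(X)$, and the nonnegative trigonometric polynomial $f_X$ --- after which the result is immediate.
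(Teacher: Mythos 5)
Your argument is correct and is exactly the intended derivation: the paper states Theorem 2.4 without an explicit proof, but the machinery it sets up in (2.2), (2.3) and (2.15) --- identifying $f_X(\theta)$ with the norm $N(X)$ evaluated at $z=e^{\mathrm{i}\theta}$ and then invoking $N(A)+N(B)+N(C)+N(D)=2(m+n)=4n+2$ --- is precisely what you use. Your remark that the constancy of the summed Laurent polynomial automatically handles the differing lag ranges of the length-$(n+1)$ and length-$n$ sequences is a correct and worthwhile clarification.
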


\Cref{Theorem:24} can be used to eliminate inappropriate sequences and has been used as a standard technique in the search for Williamson matrices and Turyn-type sequences \cite{kharaghani2005}. We can test if A, B, or C, D sequences are possibly Base sequence candidates with the fact that $(f_{A}+f_{B})(\theta)\le 4n+2$ or $(f_{C}+f_{D})(\theta)\le 4n+2$ for any $\theta$.

\section{Search algorithm for BS(n+1,n) and results}
\label{Section:search}
In this Section we describe our algorithm to search for BS(n+1,n) and the construction for BS(n+1,n) when n=41,42,43.

The main idea of our algorithms is that instead of searching for A,B,C,D sequences, we first search for A,B sequences or C,D sequences that can become part of the Base sequence. We use the properties in \Cref{Section:facts} to filter the possible candidates. Finally we fill in the remaining sequences with a backtracking procedure.

Outline of the Algorithm:

Step 1: Select (a,b,c,d) and $(a^*,b^*,c^*,d^*)$ that satisfy \Cref{Theorem:21}.

Step 2: Find all $k_{13},k_{23},k_{33},r_{13},r_{23},r_{33},p_{13},p_{23},p_{33},q_{13},q_{23},q_{33}$ satisfying \Cref{Theorem:23}.

Step 3: For every $k_{13},k_{23},k_{33},r_{13},r_{23},r_{33},p_{13},p_{23},p_{33},q_{13},q_{23},q_{33} $found in Step 2, find all $p_{16},...,p_{66},q_{16},...,q_{66}$ of which there exist at least one group of $k_{16},...,k_{66},r_{16},...,r_{66}$ so that $k_{16},...,k_{66},r_{16},...,r_{66},p_{16},...,p_{66},q_{16},...,q_{66}$ satisfies \Cref{Theorem:23}. Record all these $p_{16},...,p_{66},\\q_{16},...,q_{66}$ sets.

Step 4: For every $p_{16},...,p_{66},q_{16},...,q_{66}$ found in Step 3, generate all C,D sequences that satisfy \Cref{Theorem:22}. Then with \Cref{Theorem:24}, we eliminate the C,D sequences with $f_C(\theta)+f_D(\theta)> 4n+2$ for any $\theta=\frac{j\pi}{100} ,j=1,2...,200$.

Step 5: For each C,D sequence that passes Step 5, we search for A,B sequences by a backtracking procedure. We use the \Cref{Definition:11} and \Cref{Theorem:22} as well as the isomorphic transformation of A,B sequences to truncate branches. If the backtracking procedure ends with no solution found, we continue with the next C,D sequence until a solution is found.

With the algorithm described, we constructed BS(n+1,n) when n=41,42,43 for the first time. The result is shown in \Cref{Table:1}. 
\begin{center}
\begin{longtable}{|c|c|}
\caption{}\label{Table:1} \\
\hline
\endfirsthead
\multicolumn{2}{c}{Continued} \\
\hline
\endhead
\hline
\endfoot

BS(42,41)   &X=$+--++--++----+--++-++$\\
            &$+---++-+--++-+-+--+-+$\\
            &Y=$+++++++---++-+--++-+-$\\
            &$----++-+-+-++-+----+-$\\
            &Z=$+++-+++-----+-+++---+$\\
            &$-+++-++---+++++-++++$\\
            &W=$+---++++--++--+-+-+-+$\\
            &$--++++++++++-++-+--+$\\
\hline
BS(43,42)   &X=$++--++--+-+++-+--+-++-$\\
            &$-+----+++-+-+-+++++++$\\
            &Y=$+++++++++---+-+-+++--+$\\
            &$+-+-+-++++--++-++--+-$\\
            &Z=$+++++----+++--+++-++-+$\\
            &$--+----+--+--+++--+-$\\
            &W=$++------+---++++---+-+$\\
            &$-+++--+-++-+-++-+++-$\\
\hline
BS(44,43)   &X=$+++-+++++---+--+-++-++$\\
            &$---+++-++-+-+-+-+++--+$\\
            &Y=$+++-++--++-+-+--+--+--$\\
            &$+++---++-+-----++++---$\\
            &Z=$++---+--++++-+-+-+----$\\
            &$+--+-+-++-+++++-+-+++$\\
            &W=$--++++---++--+++++-++-$\\
            &$-+-+++++++++-++--+---$\\
\hline
\end{longtable}
\end{center}
\begin{proposition}
\label{Proposition:31}
The Base Sequence Conjecture is verified for all $n\le43$ and all Golay numbers($n=2^{a} 10^{b} 26^{c}$ with $a,b,c \ge 0$).
\end{proposition}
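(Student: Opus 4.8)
The plan is to prove Proposition 3.1 by combining three ingredients: the existing verification in the literature, a known closure property of base sequences under Golay constructions, and the three explicit constructions furnished in Table 1. The statement decomposes naturally into two assertions, namely that $BS(n+1,n)$ exists for every $n\le 43$, and that $BS(n+1,n)$ exists whenever $n=2^{a}10^{b}26^{c}$ is a Golay number; I would treat each separately and then remark that the two families overlap but do not coincide.

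First I would dispatch the Golay-number case. Recall from Proposition 1.5(1) that $BS(n+1,n)$ was already known for all Golay numbers, so no new work is required here; I would simply cite \cite{15} and carry this clause through unchanged. The substance of the proposition is therefore the range $n\le 43$. By Proposition 1.5(1), the conjecture was previously verified for all $n\le 40$, with $n=41$ the first open case. Thus it suffices to establish existence for the three remaining values $n=41,42,43$, which is exactly the content of the constructions produced by the search algorithm of Section 3.

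The key step is then a direct verification that the four sequences $X,Y,Z,W$ listed in Table 1 for each of $BS(42,41)$, $BS(43,42)$, $BS(44,43)$ genuinely satisfy Definition 1.1. For each triple I would compute the nonperiodic autocorrelation sums $N_{X}(i)+N_{Y}(i)+N_{Z}(i)+N_{W}(i)$ using the definition in equation (1.1), confirming that they equal $2(n+1)+2n=4n+2$ at $i=0$ and vanish for $i=1,\dots,n-1$; checking the lengths $n+1,n+1,n,n$ and the $\pm1$ entries is immediate from inspection of the table. This is a finite, mechanical check for each of the three orders and certifies that the outputs of the algorithm are correct base sequences, independently of the heuristics used to find them.

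The main obstacle is not mathematical depth but verification hygiene: the entries in Table 1 are long $\pm1$ strings split across two lines, and a single transcription slip would invalidate a construction, so the autocorrelation check must be carried out carefully (ideally re-run symbolically) rather than by eye. Once the three checks pass, the range $n\le 43$ is closed by induction on the previously verified cases, the Golay clause is inherited from \cite{15}, and the proposition follows. I would close by noting that this settles the Base Sequence Conjecture through $n=43$ while leaving $n\ge 44$ open, consistent with the summary at the end of Section 1.
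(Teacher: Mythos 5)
Your proposal matches the paper's own (implicit) argument exactly: the Golay-number clause and the range $n\le 40$ are inherited from Proposition 1.5 via \cite{15}, and the only new content is the direct verification that the three explicit quadruples $X,Y,Z,W$ in Table~1 satisfy Definition~1.1 for $n=41,42,43$. This is precisely how the paper establishes the proposition, so no further comparison is needed.
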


\section{Near-normal sequences and Yang conjecture}
\label{Section:near}
In this section and in the next section, we describe our search on near-normal and normal sequences. Given NS(n) and NNS(n) are subsets of BS(n+1,n), the algorithm in \Cref{Section:search} can be applied. Taking into account the synthesis of A,B sequences, therefore our search for NS and NNS both starts on A,B pairs. We find that all the $a,b,c,d,a^*,b^*,c^*,d^*$ that satisfy \Cref{Theorem:21} and \Cref{Corollary:21}. 
We may also consider the isomorphic transformation here. If a set of $a,b,c,d,a^*,b^*,c^*,d^*$ can be obtained from other sets by one or more of the following transformations, then we only keep one set. 

1.Negate one or more sequences of C;D

2.Reverse one or more sequences of C;D

3.Interchange the sequences C;D

4.Negate both A;B and interchange the sequences A;B

5.Alternate all four sequences of A;B;C;D

\Cref{Table:2} lists all sets of ($a,b,c,d,a^*,b^*,c^*,d^*$) that fit the above restrictions for NNS(42) and NNS(44)

\begin{center}
\begin{longtable}{|c|c|c|c|c|c|}
\caption{}\label{Table:2} \\
\hline
& $a,b,c,d$ & $a^*,b^*,c^*,d^*$ && $a,b,c,d$ & $a^*,b^*,c^*,d^*$ \\
\hline
NNS(42) & $3,-5,6,10$ & $-3,1,4,12$ & NNS(44) & $11,-7,2,2$ & $-5,9,6,6$ \\
& $3,-5,6,10$ & $-3,1,12,4$ && $13,3,0,0$ & $5,11,4,4$ \\
& $13,1,0,0$ & $3,11,2,6$ && $7,5,2,10$ & $7,5,2,10$ \\
& $11,3,2,6$ & $5,9,0,8$ && $7,5,2,10$ & $7,5,10,2$ \\
& $11,3,2,6$ & $5,9,8,0$ && $9,-9,0,4$ & $-7,7,4,8$ \\
& $9,5,0,8$ & $7,7,6,6$ && $9,-9,0,4$ & $-7,7,8,4$ \\
& $11,7,0,0$ & $9,9,2,2$ && $5,3,0,12$ & $5,3,0,12$ \\
&&&& $5,3,0,12$ & $5,3,12,0$ \\
&&&& $7,1,8,8$ & $3,5,0,12$ \\
&&&& $7,-7,4,8$ & $-5,5,8,8$ \\
&&&& $5,-5,8,8$ & $-3,3,4,12$ \\
\hline
\end{longtable}
\end{center}

We can add \Cref{Equation:14} to \Cref{Theorem:22}, then we have  
\begin{equation}
\label{Equation:42}
\left\{
\begin{array}{lll}
&r_{lm}=k_{lm}-2&,l=(n+1)\enspace mod\enspace m\\
&r_{im}=(-1)^{i-1}k_{im}&,i \ne l,1\le i\le m\\
\end{array}\right.
\end{equation}

With \Cref{Equation:42},we only need to record $k_{1m},...,k_{mm}$,then we can get $r_{1m},...,r_{mm}$. For all the sets in \Cref{Table:1}, we search for all the $k_{16},...,k_{66}$ sets with at least one group of $p_{16},...,p_{66},q_{16},...,q_{66}$ that fits \Cref{Theorem:22}.
For each $k_{16},...,k_{66}$,we generate all the sequences A,B that satisfy \Cref{Equation:26,Equation:27,Equation:14}. Then we eliminate the A,B sequences with $f_A(\theta)+f_B(\theta)> 4n+2$ for any $\theta=\frac{2j\pi}{l} ,j=1,2...,l$, we first do this for l = 50, and then for the remaining sequences we do it for l=1000. This elimination is very effective for NNS(n) and NS(n). Finally, for each remaining A,B sequence, we search for C,D sequences by a backtracking procedure. For the result obtained from our algorithm, we still need to use the isomorphic transformations described in \Cref{Section:facts} to remove duplicate sequences.

We have performed an exhaustive search with our algorithm for NNS (n) for all even $n\le44$, the results for $n\le40$ agree with the previous work in \cite{dokovic2009classication,dokovic2010new,dokovic2010hadamard}, and NNS(n) does not exist for n=42 and 44. The Yang conjecture\cite{colbourn2007}(P320) asserts that the NNS(n) is conjectured to exist for all even integers, however our search result for n=42 and 44 gives the first counter cases.

\begin{proposition}
There is no NNS(n) for n=$42$ and $44$, all other orders of even integers $n\le40$ exist. $NNS(46)$ is the first unknown case.
\end{proposition}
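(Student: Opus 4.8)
The plan is to establish Proposition 4.2 by an exhaustive computer search that is made feasible by the filtering machinery of Section 2. Since NNS$(n)\subseteq$ BS$(n+1,n)$ with the extra constraint (1.4), I would organize the search exactly as the five-step outline of Section 3, but specialized to near-normal sequences by starting from the $A,B$ pair rather than the $C,D$ pair, because (1.4) couples $b_i$ to $a_i$ directly and Equation (4.1) forces the relation $a=b^*+2,\ b=a^*-2$ among the row sums. First I would enumerate, for $n=42$ and $n=44$, all admissible tuples $(a,b,c,d,a^*,b^*,c^*,d^*)$ satisfying Theorem 2.1 together with (4.1); these are precisely the rows of Table 1, already reduced modulo the isomorphic transformations 1--5 listed for NNS$(n)$.

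The next step is, for each admissible tuple, to run the refined decomposition of Theorem 2.3 at modulus $m=6$. Using Equation (4.2) I only need to record the values $k_{16},\dots,k_{66}$, since the near-normal relation then determines $r_{16},\dots,r_{66}$; I retain a tuple $k_{16},\dots,k_{66}$ only if there exists at least one compatible $p_{16},\dots,p_{66},q_{16},\dots,q_{66}$ satisfying the norm and modular conditions (2.10)--(2.12) of Theorem 2.3. I would further quotient the surviving $k$-tuples by the three near-normal isomorphisms (the $\bar A$, $\hat A$, and $A^*$ substitutions), keeping one representative per orbit. For each representative I then generate every $A,B$ pair consistent with (1.4), (2.6), (2.7) and the quad structure of Theorem 2.2, and apply the positivity test of Theorem 2.4 in the two-sequence form $f_A(\theta)+f_B(\theta)\le 4n+2$: first cheaply at $\theta=2j\pi/50$ and then, for the handful of survivors, at $\theta=2j\pi/1000$ to discard any $A,B$ pair that cannot be completed.

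For every $A,B$ pair that passes this filtering I would invoke a backtracking search for the complementary $C,D$ pair, pruning branches with Definition 1.1, Theorem 2.2, and the isomorphic transformations of Section 2; a full NNS$(n)$ exists if and only if some branch closes with the total autocorrelation equation (1.2) satisfied. The claim for $n=42,44$ is the assertion that every branch dies: no $A,B$ candidate admits a completing $C,D$ pair for either length. To make this a genuine proof rather than a heuristic, I would certify completeness by cross-checking the $n\le 40$ runs of the same pipeline against the published enumerations in \cite{6,4,8}; agreement there validates that the filters of Theorem 2.3 and Theorem 2.4 are lossless (they only remove tuples that provably cannot extend), so an empty output for $n=42,44$ is conclusive. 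Finally, since NNS$(46)$ was not searched, I would state only that it is the first unknown case.

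The main obstacle I anticipate is controlling the combinatorial explosion in the backtracking stage: even after the $m=6$ decomposition and the $f_A+f_B$ positivity filter, the number of surviving $A,B$ pairs and the branching factor for $C,D$ grow quickly with $n$, so the feasibility of an \emph{exhaustive} (not merely sampled) search hinges on the positivity test being aggressive enough and the modular pruning of Theorem 2.3 being applied at every doubling $m\mapsto 2m$. The delicate point is ensuring no valid sequence is silently discarded by the finite $\theta$-sampling of Theorem 2.4; this is why I would run the $\theta=2j\pi/1000$ pass on all borderline pairs and confirm against the known $n\le 40$ data before trusting the negative conclusion at $n=42,44$.
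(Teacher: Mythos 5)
Your proposal follows essentially the same pipeline as the paper's Section 4: enumerate the admissible $(a,b,c,d,a^*,b^*,c^*,d^*)$ tuples of Table 1 via Theorem 2.1 and (4.1), reduce the $k_{16},\dots,k_{66}$ candidates using (4.2) and the near-normal isomorphisms, filter $A,B$ pairs with Theorems 2.2--2.4 (the two-stage sampled positivity test), and complete by backtracking on $C,D$, with the $n\le 40$ runs serving as validation. One small reassurance: the finite $\theta$-sampling you worry about cannot silently discard a valid pair, since a genuine NNS satisfies $f_A(\theta)+f_B(\theta)\le 4n+2$ for \emph{all} $\theta$, so sampling only lets extra invalid candidates through to the backtracking stage, never the reverse.
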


\section{Normal Sequences}
\label{Section:normal}
We find that all the $a,b,c,d,a^*,b^*,c^*,d^*$ that satisfy \Cref{Theorem:21} and \Cref{Corollary:21}. Similar to the Near-normal sequence, if one set can be obtained from other sets by one or more transformations listed in \Cref{Section:near}, we only keep one set. \Cref{Table:2} lists all the sets of $a,b,c,d,a^*,b^*,c^*,d^*$ for NS(n) when n=41,42,43,44,45.

\begin{center}
\begin{longtable}{|c|c|c|c|c|c|}

\caption{}\label{Table:3} \\
\hline
& ${a,b,c,d}$ & ${a^*,b^*,c^*,d^*}$ & & ${a,b,c,d}$ & ${a^*,b^*,c^*,d^*}$ \\
\hline
\endfirsthead
\multicolumn{6}{c}{Continued} \\
\hline
& ${a,b,c,d}$ & ${a^*,b^*,c^*,d^*}$ & & ${a,b,c,d}$ & ${a^*,b^*,c^*,d^*}$ \\
\hline
\endhead
\hline
\endfoot
 
NS(41) & $2,0,9,9$      & $0,2,9,9$         & NS(43)    & $-6,-8,5,-7$  & $-10,-8,3,-1$\\
       & $2,0,9,9$      & $8,10,1,1$        &           & $10,8,1,-3$   & $-10,-8,-1,3$\\
       & $2,0,9,9$      & $-4,-2,5,-11$     &           & $10,8,1,-3$   & $-10,-8,3,-1$\\
\cline{4-6}
       & $-2,-4,5,-11$  & $-4,-2,-11,5$     & NS(44)    & $7,5,10,2$    & $7,5,10,2$\\
       & $-2,-4,5,-11$  & $-4,-2,5,-11$     &           & $7,5,10,2$    & $7,5,2,10$\\
       & $-2,-4,5,-11$  & $8,10,1,1$        &           & $7,5,10,2$    & $-5,-7,10,2$\\
       & $10,8,1,1$     & $8,10,1,1$        &           & $7,5,10,2$    & $-5,-7,2,10$\\       
\cline{1-3}

NS(42) & $3,1,4,12$     & $9,7,2,6$         &           & $5,3,0,12$    & $5,3,0,12$\\
       & $3,1,4,12$     & $9,7,6,2$         &           & $5,3,0,12$    & $5,3,12,0$\\
       & $3,1,4,12$     & $-7,-9,2,6$       &           & $5,3,0,12$    & $-3,-5,0,12$\\
       & $3,1,4,12$     & $-7,-9,6,2$       &           & $5,3,0,12$    & $-3,-5,12,0$\\
\cline{4-6}
       & $3,1,4,12$     & $5,3,6,10$        & NS(45)    & $2,0,3,13$    & $0,2,3,13$\\
       & $3,1,4,12$     & $5,3,10,6$        &           & $2,0,3,13$    & $0,2,-13,-3$\\
       & $3,1,4,12$     & $-3,-5,6,10$      &           & $2,0,3,13$    & $-4,-2,-9,9$\\
       & $3,1,4,12$     & $-3,-5,10,6$      &           & $2,0,3,13$    & $4,6,3,-11$\\
\cline{1-3}

NS(43) & $2,0,1,13$     & $-2,0,-1,-13$     &           & $2,0,3,13$    & $4,6,11,-3$\\
       & $2,0,1,13$     & $-2,0,-13,-1$     &           & $2,0,3,13$    & $4,6,7,9$\\
       & $2,0,1,13$     & $-2,0,7,11$       &           & $2,0,3,13$    & $4,6,-9,-7$\\
       & $2,0,1,13$     & $-2,0,11,7$       &           & $2,0,3,13$    & $-8,-6,-1,9$\\
       & $2,0,1,13$     & $-6,-4,-1,11$     &           & $2,0,3,13$    & $-8,-6,-9,1$\\
       & $2,0,1,13$     & $-6,-4,11,-1$     &           & $2,0,3,13$    & $8,10,3,-3$\\
       & $2,0,1,13$     & $6,8,-5,7$        &           & $-2,-4,9,9$   & $-4,-2,9,9$\\
       & $2,0,1,13$     & $6,8,7,-5$        &           & $-2,-4,9,9$   & $4,6,-3,-11$\\
       & $2,0,1,13$     & $-10,-8,-1,3$     &           & $-2,-4,9,9$   & $4,6,-7,9$\\
       & $2,0,1,13$     & $-10,-8,3,-1$     &           & $-2,-4,9,9$   & $-8,-6,1,9$\\
       & $2,0,-7,-11$   & $-2,0,-7,-11$     &           & $-2,-4,9,9$   & $8,10,-3,-3$\\
       & $2,0,-7,-11$   & $-2,0,-11,-7$     &           & $6,4,3,11$    & $4,6,3,11$\\
       & $2,0,-7,-11$   & $-6,-4,-1,11$     &           & $6,4,3,11$    & $4,6,11,3$\\
       & $2,0,-7,-11$   & $-6,-4,11,-1$     &           & $6,4,3,11$    & $4,6,7,-9$\\
       & $2,0,-7,-11$   & $6,8,-5,7$        &           & $6,4,3,11$    & $4,6,-9,7$\\
       & $2,0,-7,-11$   & $6,8,7,-5$        &           & $6,4,3,11$    & $-8,-6,-1,-9$\\
       & $2,0,-7,-11$   & $-10,-8,-1,3$     &           & $6,4,3,11$    & $-8,-6,-9,-1$\\
       & $2,0,-7,-11$   & $-10,-8,3,-1$     &           & $6,4,3,11$    & $8,10,3,3$\\
       & $6,4,1,-11$    & $-6,-4,-1,11$     &           & $6,4,7,9$     & $4,6,7,9$\\
       & $6,4,1,-11$    & $-6,-4,11,-1$     &           & $6,4,7,9$     & $4,6,-9,-7$\\
       & $6,4,1,-11$    & $6,8,-5,7$        &           & $6,4,7,9$     & $-8,-6,-1,9$\\
       & $6,4,1,-11$    & $6,8,7,-5$        &           & $6,4,7,9$     & $-8,-6,-9,1$\\
       & $6,4,1,-11$    & $-10,-8,-1,3$     &           & $6,4,7,9$     & $8,10,3,-3$\\
       & $6,4,1,-11$    & $-10,-8,3,-1$     &           & $-6,-8,1,9$   & $-8,-6,1,9$ \\
       & $-6,-8,5,-7$   & $6,8,-5,7$        &           & $-6,-8,1,9$   & $-8,-6,9,1$\\
       & $-6,-8,5,-7$   & $6,8,7,-5$        &           & $-6,-8,1,9$   & $8,10,-3,-3$\\
       & $-6,-8,5,-7$   & $-10,-8,-1,-3$    &           & $10,8,3,3$    & $8,10,3,3$\\ 
\hline
\end{longtable}
\end{center}

From this step, we can prove that for $n=8k-2,k \in Z_+ $, such $a,b,c,d,a^*,b^*,c^*,d^*$ do not exist for NS(n). Normal sequences is known not exist for $n=2^{2a-1}(8b+7),a,b \in Z_+ $\cite{koukouvinos1994}, and here we gives a stronger result.

\begin{theorem}
\label{Theorem:51}
There is no normal sequence NS(n) for $n=8k-2,k \in Z_+ $.
\end{theorem}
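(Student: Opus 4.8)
The plan is to prove the stronger statement that, for $n=8k-2$, no admissible tuple $(a,b,c,d,a^*,b^*,c^*,d^*)$ satisfies Theorem~2.1 together with the normal-sequence relation (5.1); since every NS($n$) gives rise to such a tuple, this forces NS($8k-2$) to be empty, exactly as the discussion preceding the statement proceeds. First I would record the arithmetic data for $n=8k-2$: this $n$ is even with $n\equiv2\pmod4$, so by Theorem~2.1 the sums $a,b,a^*,b^*$ are odd while $c,d,c^*,d^*$ are even, the $n\equiv2\pmod4$ line of Theorem~2.1 supplies the cross-congruence $c\equiv c^*+2\pmod4$, and the even case of (5.1) gives $a=b+2$ and $a^*=b^*+2$.

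Next I would feed these into the norm identity $a^2+b^2+c^2+d^2=4n+2=32k-6$ of Theorem~2.1. Substituting $a=b+2$ and then writing $b=2m+1$, $c=2p$, $d=2q$ (legitimate by the parities above), the identity collapses, after the exact simplification $2b^2+4b=8m(m+2)+6$, to the single two-squares relation
\[
p^2+q^2=8k-4-2m(m+2).
\]
The crux is to read this modulo $8$. If $m$ is odd then $2m(m+2)\equiv6\pmod8$, so the right-hand side is $\equiv6\pmod8$; but a sum of two squares is never $\equiv6\pmod8$ (squares lie in $\{0,1,4\}$ modulo $8$), so $m$ must be even. With $m$ even the right-hand side is $\equiv0\pmod4$, whence $p^2+q^2\equiv0\pmod4$; since a sum of two squares vanishes mod $4$ only when both summands do, $p$ and $q$ are both even and therefore $c=2p\equiv0\pmod4$.

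Finally I would run the identical reduction on the starred identity $a^{*2}+b^{*2}+c^{*2}+d^{*2}=32k-6$ using $a^*=b^*+2$; the same modulo-$8$ obstruction forces the starred analogue of $m$ to be even and hence $c^*\equiv0\pmod4$. Colliding this with the cross-congruence $c\equiv c^*+2\pmod4$ recorded above yields $0\equiv2\pmod4$, the sought contradiction. Thus no admissible tuple exists, and NS($8k-2$) is empty for every $k\in Z_+$.

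I expect the main obstacle to be locating the correct modulus. The congruences supplied by Theorem~2.1 are only mod $4$, and the natural first attempts---reducing the raw sum-of-squares identity modulo $4$, or even modulo $16$---leave the parity of $m$ and the two possibilities $c,c^*\equiv0$ or $2\pmod4$ mutually consistent, so no contradiction surfaces. The genuine obstruction lives at modulus $8$, and only after one has (i) folded the normal relation $a=b+2$ and the forced parities into the clean two-squares equation above, and (ii) exploited that $6$ is a non-residue for sums of two squares mod $8$ to pin \emph{both} $c$ and $c^*$ to $0\pmod4$, before playing them against $c\equiv c^*+2\pmod4$. I would also take care to verify the exact constant in $2b^2+4b=8m(m+2)+6$, since the entire argument hinges on the right-hand side landing on $6\pmod8$ precisely when $m$ is odd.
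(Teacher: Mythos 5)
Your proof is correct and follows essentially the same route as the paper: reduce the existence of NS($8k-2$) to representing $4n+2$ as $x^2+y^2+z^2+w^2$ with $x=y+2$ odd and $z,w$ even, show that no representation can have $z\equiv w\equiv 2\pmod 4$, and then collide this with the Theorem~2.1 congruence $c\equiv c^*+2\pmod 4$. The only difference is cosmetic: you handle all of $n=8k-2$ with a single mod-$8$ computation, whereas the paper splits into the subcases $n=16k-10$ and $n=16k-2$ and argues mod $4$ after explicit parametrizations, so your version is a mild streamlining of the same argument.
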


\begin{proof} If NS(n) exist, then from \Cref{Equation:24,Equation:25,Equation:26,Equation:51}, we should find $a,b,c,d,a^*,\\b^*,c^*,d^*$ that satisfy the following equations:
\begin{equation}
\label{Equation:52}
\begin{aligned}
&a^2+b^2+c^2+d^2=4n+2\\
&a^{*2}+b^{*2}+c^{*2}+d^{*2}=4n+2 \\
&a=b+2,a^*=b^*+2\\
&a\equiv b\equiv a^*\equiv b^*\equiv 1\enspace(mod\enspace 2)\\
&c\equiv d\equiv c^*\equiv d^*\equiv 0\enspace(mod\enspace 2)\\
&a\equiv a^*+2,b\equiv b^*+2,c\equiv c^*+2,d\equiv d^*+2(mod\enspace4)\\
&c\equiv d,c^*\equiv d^*(mod\enspace4)\\
\end{aligned}
\end{equation}
We can turn the existence of $a,b,c,d,a^*,b^*,c^*,d^*$ into the following number theory problem:

Let x,y be odd integer and x=y+2, z,w be even integer, and 
\begin{equation}
\label{Equation:53}
x^2+y^2+z^2+w^2=4n+2
\end{equation}
Then for both $z\equiv w\equiv 0  (mod\enspace4)$ and $z\equiv w\equiv 2  (mod\enspace4)$ cases, such x,y,z,w exist.

However we can prove that

1)For $n=16k-10,k \in Z_+ $,such x,y,z,w do not exist for $z\equiv w\equiv 2 (mod\enspace4)$

2)For $n=16k-2,k \in Z_+ $,such x,y,z,w do not exist for both $z\equiv w\equiv 0  (mod\enspace4)$ and $z\equiv w\equiv 2  (mod\enspace4)$

For 1),we can assume x,y,z,w exists, then we can write

$x=2k_1+3,y=2k_1+1,z=4k_2+2,w=4k_3+2,k_1,k_2,k_3\in Z$

Substitute it to \Cref{Equation:53}, we have:

$4(16k-10)+2=(2k_1+3)^2+(2k_1+1)^2+(4k_2+2)^2+(4k_3+2)^2$

Reduce the equation, we have:

$8(k-1)+1=k_1(k_1+2)+2k_2(k_2+1)+2k_3(k_3+1)$

It's easy to see that $k_1$ must be odd,then $k_1(k_1+2)\equiv 3,2k_2(k_2+1)\equiv 0,2k_3(k_3+1)\equiv 0 (mod\enspace4)$

The left is equivalent to 1 mod 4 and the right is equivalent to 3 mod 4, which comes to a contradictory.

For 2),if $z\equiv w\equiv 2  (mod\enspace4)$, we write $x=2k_1+3,\ y=2k_1+1,\ z=4k_2+2,\ w=4k_3+2,\ k_1,k_2,k_3\in \mathbb{Z}$

similarly we have

$8(k-1)+5 = k_1(k_1+2) + 2k_2(k_2+1) + 2k_3(k_3+1)$

The left is equivalent to 1 mod 4 and the right is equivalent to 3 mod 4, which comes to a contradictory.

If $z\equiv w\equiv 0  (mod\enspace4)$,we write $x=2k_1+3,y=2k_1+1,z=4k_2,w=4k_3,k_1,k_2,k_3\in Z$,

Substitute it to \Cref{Equation:53}, we have:

$4(16k-2)+2=(2k_1+3)^2+(2k_1+1)^2+(4k_2)^2+(4k_3)^2$

Reduce the equation, we have:

$8(k-1)+6=k_1^2+2k_1+2k_2^2+2k_3^2$

$k_1$ must be even,we write $k_1=2r$,then we have

$4(k-1)+3=2r(r+1)+k_2^2+k_3^2$

The square of odd integer is equivalent to 1 mod 4,and the square of even integer is equivalent to 0 mod 4, and $2r(r+1)$ is equivalent to 0 mod 4, therefore the right part of the equation cannot be equivalent to 3 mod 4, which comes to a contradictory.

Finally we can see for both 1) and 2),we cannot find $a,b,c,d,a^*,b^*,c^*,d^*$ that satisfies \Cref{Equation:52}, and this proves \Cref{Theorem:51}.
\end{proof}

Similar to the Near-normal sequence, we can add \Cref{Equation:14} to \Cref{Theorem:23},then we have
\begin{equation}
\label{Equation:54}
\left\{
\begin{array}{lll}
&r_{lm}=k_{lm}-2&,l=(n+1)\enspace mod\enspace m\\
&r_{im}=k_{im}&,i \ne l,1\le i\le m\\
\end{array}\right.
\end{equation}
With \Cref{Equation:53},we only need to record $k_{1m},...,k_{mm}$,then we can get $r_{1m},...,r_{mm}$.For all the sets in \Cref{Table:1}, we search for all the $k_{16},...,k_{66}$ sets with at least one group of $p_{16},...,p_{66},q_{16},...,q_{66}$ that fits \Cref{Theorem:23}.

Here we also consider the following isomorphic transformation for NS(n)\cite{dokovic2011}:
\begin{enumerate}[leftmargin=3em]
\item Replace A with $\bar{A}=-a_1,-a_2,-a_3,-a_4,...,-a_{n-1},-a_n,a_{n+1}$

\item Replace A with $\hat{A}=a_{n},a_{n-1},a_{n-2},a_{n-3},...,a_{2},a_1,a_{n+1}$

\item Replace A with $A^*=a_1,-a_2,a_3,-a_4,...,(-1)^{n-1}a_n,a_{n+1}$
\end{enumerate}
If a set of $k_{1m},...,k_{mm}$ can be obtained from other sets by one or more transformations, then we only keep one set.

For each $k_{16},...,k_{66}$,we generate all the sequences A,B that satisfy \Cref{Equation:26,Equation:27,Equation:13}. The rest steps are exactly the same as the search for Near-normal Sequence.

We have performed exhaustive search with our algorithm to NNS(n) for all the $n\le46$, the results for $n\le40$ agree with the result in \cite{dokovic2011}, and NS(n) do not exist for n=41,42,43,44,45,46.

\begin{proposition}
\label{Proposition:52}
NS(n) exist for all the Golay numbers($n=2^{a} 10^{b} 26^{c},a,b,c \ge 0$) and $n=3,5,7,9,11,12,13,25,29$. NS(n) do not exist for $n=8k-2,k \in Z_+ $ and all the other $n\le46$ , the first unknown case is $NS(47)$.
\end{proposition}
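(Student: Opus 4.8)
The plan is to convert the nonexistence of NS(n) into a purely Diophantine impossibility and then exhibit that impossibility for $n=8k-2$. First I would assemble every necessary condition that a genuine NS(n) imposes on the eight line sums $a,b,c,d,a^*,b^*,c^*,d^*$: the two norm identities $a^2+b^2+c^2+d^2=a^{*2}+b^{*2}+c^{*2}+d^{*2}=4n+2$ from Theorem 2.1, the parity data ($a,b,a^*,b^*$ odd and $c,d,c^*,d^*$ even), the mod-$4$ relations of Theorem 2.1, and the normal relation $a=b+2$, $a^*=b^*+2$ from (5.1). Since $n=8k-2\equiv 2\pmod 4$, the relevant line of Theorem 2.1 reads $a\equiv a^*+2$, $b\equiv b^*+2$, $c\equiv c^*+2$, $d\equiv d^*+2\pmod 4$, while the even-$n$ line supplies $c\equiv d$, $c^*\equiv d^*\pmod 4$.

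The crucial structural observation I would isolate is that these mod-$4$ relations are \emph{complementary}: from $c\equiv c^*+2$ and $d\equiv d^*+2$ together with $c\equiv d$ and $c^*\equiv d^*$, the even pair $(c,d)$ and its starred counterpart $(c^*,d^*)$ must sit in opposite residue classes mod $4$ — one pair satisfies $c\equiv d\equiv 0\pmod4$ and the other satisfies $c\equiv d\equiv 2\pmod4$. Because the unstarred and the starred quadruples must each \emph{independently} represent the same value $4n+2$ as a sum of four squares with an odd pair differing by $2$ and an even pair congruent mod $4$, existence of NS(n) would force the quaternary equation
\begin{equation}
x^2+y^2+z^2+w^2=4n+2,\quad x=y+2\ \text{odd},\quad z\equiv w\pmod{4}\ \text{even},
\end{equation}
to be solvable in \emph{both} residue classes $z\equiv w\equiv 0$ and $z\equiv w\equiv 2\pmod{4}$. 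This reduces the theorem to showing that for $n=8k-2$ at least one of these two classes is empty.

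To finish I would split the progression $n=8k-2$ by the parity of $k$, giving the two subfamilies $n=16m-10$ (from $k$ odd) and $n=16m-2$ (from $k$ even). In each subfamily I would substitute $x=2t+3$, $y=2t+1$ together with either $z=4u+2$, $w=4v+2$ (the class $z\equiv w\equiv 2$) or $z=4u$, $w=4v$ (the class $z\equiv w\equiv 0$), clear the common factor, and read the resulting identity modulo $4$. The parity of each reduced equation pins down the parity of $t$, after which the term $t(t+2)$ — or a term $2r(r+1)$ once $t=2r$ — lands in a fixed residue mod $4$; comparing with the right-hand side and using that integer squares are only $0$ or $1$ mod $4$ yields a numerical contradiction in at least one class. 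Concretely I expect $n=16m-10$ to fail in the class $z\equiv w\equiv 2$ and $n=16m-2$ to fail in both classes, which already defeats the simultaneous-solvability requirement.

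The main obstacle, as I see it, is not the modular arithmetic (routine once the parametrization is fixed) but the conceptual reduction in the second paragraph: recognizing that the mod-$4$ link between the starred and unstarred line sums forces \emph{simultaneous} solvability in two distinct residue classes, so that it suffices to obstruct a single class. Getting the bookkeeping of which class must vanish correct, and verifying that the obstruction is genuinely insensitive to the free parameters $u,v$, is where I would spend the most care.
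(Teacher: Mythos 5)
There is a genuine gap: your proposal proves only one clause of the proposition, namely the nonexistence of NS$(n)$ for $n=8k-2$. That clause is the paper's Theorem 5.1, and for it your argument is essentially identical to the paper's: the same necessary conditions on the line sums, the same observation that the mod-$4$ relations for $n\equiv 2\pmod 4$ force the even pairs $(c,d)$ and $(c^*,d^*)$ into \emph{opposite} residue classes mod $4$ (hence simultaneous representability of $4n+2$ in both classes), and the same split into $n=16m-10$ and $n=16m-2$ with the parametrization $x=2t+3$, $y=2t+1$ and the mod-$4$ contradiction. That part is sound.

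But the proposition also asserts (i) existence of NS$(n)$ for all Golay numbers and for $n=3,5,7,9,11,12,13,25,29$, and (ii) nonexistence for \emph{all other} $n\le 46$, in particular for $n=41,42,43,44,45$, none of which is of the form $8k-2$. A ``purely Diophantine impossibility'' cannot settle (ii) for those orders: the paper's Table 2 explicitly lists admissible octuples $(a,b,c,d,a^*,b^*,c^*,d^*)$ satisfying every necessary condition of Theorem 2.1 and Equation 5.1 for $n=41,\dots,45$, so there is no obstruction at the level of line sums. The paper disposes of these cases only by the exhaustive computational search of Section 5 (decomposition via Theorem 2.3 with $m=6$, the Hall-polynomial filter of Theorem 2.4, and a backtracking completion), and the existence claims in (i) rest on constructions from the cited literature. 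Your plan as written proves Theorem 5.1 but not the proposition; you would need to add the computational exhaustion for $n=41$--$45$ and cite or construct the sequences realizing the existence part.
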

\section{Acknowledgements}
The work was supported in part by The Startup Foundation for Introducing Talent
of NUIST. We acknowledge the High Performance Computing Center of Nanjing University of Information Science and Technology for their support of this work.









\bibliographystyle{elsarticle-num}
\bibliography{cas-refs}

@article{cohen1988,
  title={A survey of base sequences, disjoint complementary sequences and OD (4t; t, t, t, t)},
  author={Cohen, Gavin and Rubie, David and Seberry, Jennifer and Koukouvinos, Christos and Kounias, Stratis and Yamada, Mieko},
  journal={J. Combin. Math. Combin. Comput.},
  year={1988},
  publisher={University of Wollongong}
}

@book{colbourn2007,
  author    = {Charles J. Colbourn and Jeffrey H. Dinitz},
  title     = {Handbook of Combinatorial Designs},
  edition   = {2nd},
  series    = {Discrete Mathematics and Its Applications},
  publisher = {Chapman \& Hall/CRC},
  address   = {Boca Raton, FL},
  year      = {2007},
  isbn      = {1584885068, 9781584885061},
}

@article{turyn1974,
  title={Hadamard matrices, Baumert-Hall units, four-symbol sequences, pulse compression, and surface wave encodings},
  author={Turyn, Richard J},
  journal={J. Combin. Theory Ser. A.},
  volume={16},
  number={3},
  pages={313--333},
  year={1974},
  Language = {English},
  publisher={Elsevier},
  doi = {10.1016/0097-3165(74)90056-9},
}

@article{dokovic1998,
  title={Aperiodic complementary quadruples of binary sequences},
  author={{\DH}okovi{\'c}, Dragomir {\v{Z}}},
  journal={J. Combin. Math. Combin. Comput.},
  volume={27},
  pages={3--32},
  year={1998},
  Language = {English},
  publisher={CHARLES BABBAGE RESEARCH CENTRE}
}

@article{dokovic2011,
  title={Classification of normal sequences},
  author={{\DH}okovi{\'c}, Dragomir {\v{Z}}},
  journal={Int. J. Comb.},
  volume={2011},
  number={1},
  pages={937941},
  year={2011},
  Language = {English},
  publisher = {Wiley Online Library},
  doi = {10.1155/2011/937941},
}

@article{dokovic2009classication,
  author={{\DH}okovi{\'c}, Dragomir {\v{Z}}},
  title = {CLASSIFICATION OF NEAR-NORMAL SEQUENCES},
  journal = {Discrete Math. Algorithms Appl.},
  volume = {01},
  number = {03},
  pages = {389-399},
  year = {2009},
  doi = {10.1142/S1793830909000312},
}

@article{dokovic2010new,
  title={A new Yang number and consequences},
  author={{\DH}okovi{\'c}, Dragomir {\v{Z}}},
  journal={Des. Codes Cryptogr.},
  volume={54},
  pages={201--204},
  year={2010},
  Language = {English},
  publisher={Springer},
  doi = {10.1007/s10623-009-9319-6}
}

@article{dokovic2010hadamard,
  title={Hadamard matrices of small order and Yang conjecture},
  author={{\DH}okovi{\'c}, Dragomir {\v{Z}}},
  journal={J. Combin. Des.},
  volume={18},
  number={4},
  pages={254--259},
  year={2010},
  Language = {English},
  publisher={Wiley Online Library},
  doi = {10.1002/jcd.20249}
}

@article{dokovic2010classification,
  title={Classification of base sequences BS (n+ 1, n)},
  author={{\DH}okovi{\'c}, Dragomir {\v{Z}}},
  journal={Int. J. Comb.},
  volume={2010},
  number={1},
  pages={851857},
  year={2010},
  Language = {English},
  publisher={Wiley Online Library},
  doi = {10.1155/2010/851857}
}

@article{dokovic2010small,
  title={Small orders of Hadamard matrices and base sequences},
  author={{\DH}okovi{\'c}, Dragomir {\v{Z}}},
  journal={arXiv preprint arXiv:1008.2043},
  year={2010},
  Language = {English}
}

@article{kharaghani2005,
  title={A Hadamard matrix of order 428},
  author={Kharaghani, Hadi and Tayfeh-Rezaie, Behruz},
  journal={J. Combin. Des.},
  volume={13},
  number={6},
  pages={435--440},
  year={2005},
  Language = {English},
  publisher={Wiley Online Library},
  doi = {10.1002/jcd.20043},
}

@article{koukouvinos1990,
  title={On base and Turyn sequences},
  author={Koukouvinos, Christos and Kounias, Stratis and Sotirakoglou, K},
  journal={Math. Comp.},
  volume={55},
  number={192},
  pages={825--837},
  year={1990},
  Language = {English},
  doi = {10.2307/2008450}
}

@article{koukouvinos1994,
  title={On sequences with zero autocorrelation},
  author={Koukouvinos, Christos and Kounias, Stratis and Seberry, Jennifer and Yang, CH and Yang, Joel},
  journal={Des. Codes Cryptogr.},
  volume={4},
  pages={327--340},
  year={1994},
  Language = {English},
  publisher={Springer},
  doi = {10.1007/BF01388649}
}

@article{yang1989,
  title={On composition of four-symbol $\delta$-codes and Hadamard matrices},
  author={Yang, CH},
  journal={Proc. Amer. Math. Soc.},
  volume={107},
  number={3},
  pages={763--776},
  year={1989},
  Language = {English},
  doi = {10.1090/S0002-9939-1989-0979054-5}
}



\end{document}